\documentclass[12pt,reqno]{amsart}
\usepackage{amsmath,amssymb,amscd,latexsym,amsthm,mathrsfs}
\usepackage[unicode]{hyperref}
\usepackage{hypbmsec}
\textheight22cm \textwidth15cm \hoffset-1.7cm \voffset-.5cm
\ifx\pdfoutput\undefined\else
%\setlength{\paperwidth}{156.5mm}
%\setlength{\paperheight}{231mm}
%\advance\evensidemargin by -29.5mm
%\advance\oddsidemargin by -29.5mm
%\advance\topmargin by -26.5mm
\hypersetup{
colorlinks=true,
linkcolor=blue,
citecolor=blue,
urlcolor=blue,
filecolor=blue,
bookmarksnumbered=true,
pdfstartview=FitH,
pdfhighlight=/N
}
\fi

\newtheorem{theorem}{Theorem}

\theoremstyle{remark}

\def\m{{\operatorname{m}}}
\def\d{{\operatorname{d}}}
\def\Li{{\operatorname{Li}}}

\begin{document}

\hypersetup{pdfauthor={Jes\'us Guillera, Mathew Rogers},%
pdftitle={Mahler measure and the WZ algorithm}}

\title{Mahler measure and the WZ algorithm}

\author{Jes\'us Guillera}
\address{Av.\ Ces\'areo Alierta, 31 esc.~izda 4$^\circ$--A, Zaragoza, SPAIN}
\email{jguillera@gmail.com}

\author{Mathew Rogers}
\address{Department of Mathematics, Universit\'{e} de Montr\'{e}al, Montreal, QC, Canada}
\email{mathewrogers@gmail.com}

\date{May 8, 2013}
\subjclass[2000]{Primary 33C20, 33F10; Secondary 19F27}

\begin{abstract}
We use the Wilf-Zeilberger method to prove identities
 between Mahler measures of polynomials.  In particular, we offer a new proof of
 a formula due to Lal\'{i}n, and we show how to translate the identity into a formula involving elliptic dilogarithms.  This work settles a challenge problem proposed by Kontsevich and Zagier in their paper ``Periods" \cite{KZ}.
\end{abstract}

\maketitle

\section{Introduction}

In this paper we use the Wilf-Zeilberger algorithm to prove relations between Mahler measures of polynomials.
The (logarithmic) Mahler measure of an $n$-variable Laurent
polynomial, $P(x_1,\dots,x_n)$, is defined by
\begin{equation*}
\m(P):=\int_{0}^{1}\dots\int_{0}^{1}\log\left|P\left(e^{2\pi i
\theta_1},\dots,e^{2\pi i
\theta_n}\right)\right|\d\theta_1\dots\d\theta_n.
\end{equation*}
We are primarily interested in the following special
function:
\begin{equation*}
m(\alpha):=\m\left(\alpha+x+\frac{1}{x}+y+\frac{1}{y}\right),
\end{equation*}
because there are many conjectural formulas relating special
values of $m(\alpha)$ to values of $L$-functions attached to elliptic
curves. Deninger hypothesized that $m(1)$
should be a rational multiple of $L(E_{15},2)/\pi^2$, where $E_{15}$ is a
conductor $15$ elliptic curve \cite{De}. Boyd used numerical
calculations to make the constant explicit \cite{Bo1}:
\begin{equation}\label{deninger formula}
m\left(1\right)=\frac{15}{4\pi^2}L\left(E_{15},2\right).
\end{equation}
The second author and Zudilin proved formula \eqref{deninger formula} quite recently \cite{RZ}.  We note that
Boyd's paper contains dozens of additional formulas for $m(\alpha)$, and most of those remain open.

It is usually much easier to prove identities between
Mahler measures, than to prove formulas relating them to
$L$-functions.   One important intermediate step in the proof of \eqref{deninger formula},
is to show that
\begin{equation}\label{matilde's formula}
11m\left(1\right)=m\left(16\right).
\end{equation}
Formula \eqref{matilde's formula} has been the subject of several questions and papers.  The first proof of \eqref{matilde's formula} is due to Lal\'{i}n \cite{La}.  She used the fact that Mahler measures can be interpreted as values of regulator maps on $K_2$ groups of elliptic curves.  The connection to algebraic $K$-theory was outlined and exploited by Rodriguez-Villegas in \cite{RV2}.  An equivalent version of formula \eqref{matilde's formula} appears in the paper ``Periods" of Kontsevich and Zagier \cite{KZ}.  They asked if the relation
\begin{equation}\label{zagier question}
6m(1)=m(5)
\end{equation}
can be proved with elementary calculus \cite[pg.~9]{KZ}, \cite[pg.~56]{Zg}.  Mahler measures are examples of \textit{periods} - numbers which can be expressed as multiple integrals of algebraic functions, over domains described by algebraic equations.  Kontsevich and Zagier conjectured that any relation between periods should be provable with only ``the rules of calculus".  They suggested finding an elementary proof of \eqref{zagier question} as a challenge problem.  The equivalence of \eqref{matilde's formula} and \eqref{zagier question}, follows easily from a result of Kurokawa and Ochiai \cite{KO}:
\begin{equation}
m(1)+m(16)=2m(5).
\end{equation}
In the first section of this paper we present an elementary proof of \eqref{matilde's formula}, answering the Kontsevich-Zagier challenge.  The most difficult part of the proof is to establish formula \eqref{log(2)
formula 3 generalizaion} below, and we accomplish this using the
Wilf-Zeilberger method.

In the second portion of the paper, we present several new
$q$-series expansions for Mahler measures.  If $\varphi(q)$ denotes the standard theta function
\begin{equation}\label{theta def}
\varphi(q):=\sum_{n\in\mathbb{Z}}q^{n^2},
\end{equation}
then
\begin{equation}
m\left(4\frac{\varphi^2(q)}{\varphi^2(-q)}\right)=\frac{4}{\pi}\sum_{n\in\mathbb{Z}}D(i
q^n),
\end{equation}
where $D(z)$ is the Bloch-Wigner dilogarithm.  These results provide an easy way to translate between Mahler measures and
elliptic dilogarithms.  In Theorem \ref{theorem bertin} we
translate an exotic relation due to Bertin into an identity
between Mahler measures \cite{bert2}.  We conclude by briefly comparing our new results to
Ramanujan's formulas for $1/\pi$.

%%%%%%%%%%%%%%%%%%%%%%%%%%%%%%%%%%%%%%%%%%%%%%%%%%%%%%%%%%%%%%%%%

\section{An application of the WZ method}\label{section: WZ}

%%%%%%%%%%%%%%%%%%%%%%%%%%%%%%%%%%%%%%%%%%%%%%%%%%%%%%%%%%%%%%%%%

We begin with a brief review of the WZ method. We say
that $F(n,k)$ is hypergeometric, if $F(n+1,k)/F(n,k)$ and
$F(n,k+1)/F(n,k)$ are rational functions of $n$ and $k$.  Two
hypergeometric functions are called a WZ-pair if they satisfy the
following functional equation:
\begin{equation}\label{WZ functional equation}
F(n+1,k)-F(n,k)=G(n,k+1)-G(n,k).
\end{equation}
Wilf and Zeilberger proved that if $F(n,k)$ satisfies \eqref{WZ
functional equation}, then it is always possible to determine
$G(n,k)$ (see \cite{WZ} and \cite{WZ2}).  Their algorithm has been
implemented in \texttt{Maple} and \texttt{Mathematica}.

Let us consider WZ-pairs where $F$ and $G$ are meromorphic
functions of $n$ and $k$. If we sum both sides of \eqref{WZ
functional equation} from $n=0$ to $n=\infty$, the left-hand side
of the equation telescopes, and we have
\begin{equation*}
-F(0,k)+\lim_{n\rightarrow\infty}F(n,k)=\sum_{n=0}^{\infty}G(n,k+1)-\sum_{n=0}^{\infty}G(n,k).
\end{equation*}
In instances where $F(0,k)=0$, and
$\lim_{n\rightarrow\infty}F(n,k)=0$, this becomes
\begin{equation*}
\sum_{n=0}^{\infty}G(n,k)=\sum_{n=0}^{\infty}G(n,k+1).
\end{equation*}
It follows immediately that the series is periodic with respect to
$k$. If the series also converges uniformly, and $j$ is an
integer, then we can write
\begin{equation*}
\sum_{n=0}^{\infty}G(n,k)=\sum_{n=0}^{\infty}\lim_{j\rightarrow\infty}G(n,k+j).
\end{equation*}
If $\lim_{j\rightarrow\infty}G(n,k+j)$ is independent of $k$, then
we can conclude that for unrestricted $k$:
\begin{equation}\label{Carlson identity}
\sum_{n=0}^{\infty}G(n,k)=\text{constant}.
\end{equation}
We use this method to prove Theorem \ref{main wz theorem} below.  Theorem \ref{main wz theorem} is the key result we need to establish the relation between $m(1)$ and $m(16)$.

In order to apply the WZ-method to equation
\eqref{matilde's formula}, we need to relate the Mahler measures to
hypergeometric functions.  We use several of the identities
summarized in \cite{Rgsubmit}. If $r\in(0,1]$, results from
\cite{KO} and \cite{RV} show that:
\begin{align}
m\left(\frac{4}{r}\right)=&\log\left(\frac{4}{r}\right)-\sum_{n=1}^{\infty}{2n\choose
n}^2\frac{\left(r/4\right)^{2n}}{2n},\label{m(a) rodriguez
expansion}\\
m\left(4r\right)=&4\sum_{n=0}^{\infty}{2n\choose
n}^2\frac{\left(r/4\right)^{2n+1}}{2n+1}.\label{m(a) kurokawa
expansion}
\end{align}
Both of these sums depend upon the same binomial
coefficients. Therefore, if we define $s$ by
\begin{equation*}
s:=\frac{m\left(4/r\right)}{m\left(4r\right)},
\end{equation*}
we can form a linear combination of \eqref{m(a) rodriguez
expansion} and \eqref{m(a) kurokawa expansion} to obtain
\begin{equation}\label{formula for log(4/r)}
\log\left(\frac{4}{r}\right)=r
s+\sum_{n=1}^{\infty}\frac{\left(2(1+r
s)n+1\right)}{(2n)(2n+1)}{2n\choose
n}^2\left(\frac{r}{4}\right)^{2n}.
\end{equation}
It follows that $(r,s)\in\mathbb{\bar{Q}}^2$ and $r\in(0,1]$,
\textit{if and only if} \eqref{formula for log(4/r)} also gives an
explicit formula for an algebraic hypergeometric series.  By
linearity, explicit cases of \eqref{formula for log(4/r)}
immediately imply formulas for $s$.  Notice that \eqref{formula
for log(4/r)} diverges when $|r|>1$.

\begin{theorem}\label{first wz theorem} The following formulas are true:
\begin{align}
2\log(2)=&1+\sum_{n=1}^{\infty}\frac{(4n+1)}{(2n)(2n+1)}{2n\choose
n}^2\frac{1}{2^{4n}},\label{log(2) formula 1}\\
3\log(2)=&2+\sum_{n=1}^{\infty}\frac{(6n+1)}{(2n)(2n+1)}{2n\choose
n}^2\frac{1}{2^{6n}},\label{log(2) formula 2}\\
8\log(2)=&\frac{11}{2}+\sum_{n=1}^{\infty}\frac{(15n+2)}{(2n)(2n+1)}{2n\choose
n}^2\frac{1}{2^{8n}}.\label{log(2) formula 3}
\end{align}
Furthermore, \eqref{log(2) formula 2} is equivalent to
\begin{equation}\label{m(2) and m(8) relation}
4m(2)=m(8),
\end{equation}
and \eqref{log(2) formula 3} is equivalent to
\begin{equation}\label{m(1) and m(16) relation}
11m(1)=m(16).
\end{equation}
\end{theorem}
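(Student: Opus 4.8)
The plan is to treat all three displayed formulas as explicit instances of the already-established identity \eqref{formula for log(4/r)}, and to split the theorem into an algebraic half (matching parameters and reading off the Mahler measure relations) and an analytic half (proving the underlying hypergeometric evaluations by the WZ method). First I would note that \eqref{log(2) formula 1} is nothing more than \eqref{formula for log(4/r)} at $r=1$: there $s=m(4)/m(4)=1$ holds automatically, so the coefficient $2(1+rs)n+1$ collapses to $4n+1$, the constant term $rs$ becomes $1$, and $\log(4/r)=2\log 2$. Thus \eqref{log(2) formula 1} requires no new input and serves as a base case. For \eqref{log(2) formula 2} I would take $r=1/2$, so that $(r/4)^{2n}=2^{-6n}$ and $\log(4/r)=3\log 2$; for \eqref{log(2) formula 3} I would take $r=1/4$, so that $(r/4)^{2n}=2^{-8n}$ and $\log(4/r)=4\log 2$, and then multiply the resulting identity by $2$ to produce the prefactor $8\log 2$ and the constant $11/2$.

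The equivalences are then the comparatively easy direction, since the coefficient $2(1+rs)n+1$ in \eqref{formula for log(4/r)} depends linearly on $s$: matching it against an explicitly evaluated series pins down $s$, and by the definition $s=m(4/r)/m(4r)$ this is exactly a Mahler measure relation. At $r=1/2$ the coefficient $6n+1$ forces $1+rs=3$, hence $rs=2$ and $s=4$; since $s=m(8)/m(2)$, this is precisely \eqref{m(2) and m(8) relation}. After dividing \eqref{log(2) formula 3} by $2$, the coefficient $\tfrac{15}{2}n+1$ at $r=1/4$ forces $1+rs=\tfrac{15}{4}$, hence $rs=\tfrac{11}{4}$ and $s=11$; since $s=m(16)/m(1)$, this is precisely \eqref{m(1) and m(16) relation}. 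Conversely, substituting the known value of $s$ back into \eqref{formula for log(4/r)} reproduces the stated series, so the implications run both ways.

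The substance of the theorem therefore lies in proving the evaluations \eqref{log(2) formula 2} and \eqref{log(2) formula 3} as hypergeometric identities, without presupposing the value of $s$. Here I would invoke the WZ machinery assembled above: the goal is to exhibit a WZ pair $(F,G)$ satisfying \eqref{WZ functional equation} whose companion sum $\sum_{n\ge 0}G(n,k)$ telescopes in $n$ and, after one checks $F(0,k)=0$, $\lim_{n\to\infty}F(n,k)=0$, and uniform convergence, becomes independent of $k$ in the sense of \eqref{Carlson identity}. Evaluating this $k$-constant at one value of $k$ where the sum degenerates to an elementary expression in $\log 2$, and at a second value where it reproduces the binomial series $\sum \tfrac{1}{(2n)(2n+1)}\binom{2n}{n}^2 2^{-\beta n}$, then yields the identity; I expect \eqref{log(2) formula 3} in particular to appear as a specialization of a one-parameter WZ-generated family.

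The hard part will be this analytic half: finding the correct certificate $G$ for a judiciously guessed $F$, and then rigorously justifying the passage from the telescoped sum to the $k$-independent constant. The delicate points are verifying the boundary decay $\lim_{n\to\infty}F(n,k)=0$ together with uniform convergence of $\sum_n G(n,k)$, and controlling $\lim_{j\to\infty}G(n,k+j)$ so that the limit is genuinely free of $k$; this is the Carlson-type step that transfers periodicity in integer $k$ to the noninteger parameter values actually needed, and extracting the closed form in $\log 2$ from the limiting series is its crux. By contrast, the reduction of the three displayed formulas to \eqref{formula for log(4/r)} and the equivalences with \eqref{m(2) and m(8) relation} and \eqref{m(1) and m(16) relation} are essentially mechanical.
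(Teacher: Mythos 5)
Your algebraic half is correct, and it is exactly what the paper does: \eqref{log(2) formula 1} is the case $r=1$ of \eqref{formula for log(4/r)}, where $s=m(4)/m(4)=1$ holds automatically; and by linearity, the cases $r=1/2$ and $r=1/4$ show that \eqref{log(2) formula 2} holds if and only if $s=m(8)/m(2)=4$, and \eqref{log(2) formula 3} holds if and only if $s=m(16)/m(1)=11$, which are precisely \eqref{m(2) and m(8) relation} and \eqref{m(1) and m(16) relation}. The problem is that the ``analytic half,'' which you yourself identify as the substance of the theorem, is entirely deferred: you never exhibit a WZ pair. The explicit pairs \eqref{wz pair number one} and \eqref{wz pair number three}, together with the closed-form identities they produce in Theorem \ref{main wz theorem}, are the actual content of the paper's argument; ``find a certificate $G$ for a judiciously guessed $F$'' is a restatement of the task, not a proof.

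Beyond incompleteness, two steps of your plan would fail as described. First, the mechanism you propose for extracting $\log 2$ cannot work: once \eqref{Carlson identity} applies, $\sum_{n}G(n,k)$ equals the \emph{same} constant for every $k$ (it is $1/2$ for \eqref{wz pair number one} and $2$ for \eqref{wz pair number three}), so there is no value of $k$ at which the sum ``degenerates to an elementary expression in $\log 2$''---the constant is rational. In the paper, $\log 2$ enters only after the constant identity is rearranged into the Gamma-quotient identities \eqref{log(2) formula 1 generalization} and \eqref{log(2) formula 3 generalizaion}, whose two sides both have simple poles at $x=0$; subtracting the pole and using $\psi(1/2)-\psi(1)=-2\log 2$ gives, e.g.,
\begin{equation*}
\lim_{x\rightarrow 0}\left(\pi\frac{\Gamma(x)\Gamma(x+1)}{\Gamma^2\left(x+1/2\right)}-\frac{1}{x}\right)=4\log(2),
\end{equation*}
and it is this finite-part extraction at the pole, not any special evaluation of the $k$-sum, that produces the logarithms in \eqref{log(2) formula 1} and \eqref{log(2) formula 3}. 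Second, your plan treats \eqref{log(2) formula 2} on the same footing as \eqref{log(2) formula 3}, as something to be established by a WZ pair; but this is precisely what the authors report they were unable to do---they describe a WZ proof of \eqref{m(2) and m(8) relation} as so far intractable. In the paper, the logic for \eqref{log(2) formula 2} runs in the opposite direction: the identity $4m(2)=m(8)$ is already known (by $K$-theory, from the literature), and the equivalence you proved then yields the series evaluation. Under your plan as stated, \eqref{log(2) formula 2} would remain unproved.
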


So far we have not been able to prove equation \eqref{m(2)
and m(8) relation} with the WZ method.
This is surprising, because the $K$-theoretic proof of the relation between $m(2)$ and $m(8)$ \cite{LR}, is much easier than the $K$-theoretic proof of the relation between $m(1)$ and $m(16)$.  In order to prove \eqref{m(1)
and m(16) relation}, we must first prove equation \eqref{log(2) formula 3
generalizaion} below. It is interesting to note that
\texttt{Mathematica} can recognize \eqref{log(2) formula 1}, but not
\eqref{log(2) formula 2} or \eqref{log(2) formula 3}. While it is possible to derive \eqref{log(2)
formula 1} from Dougall's theorem \cite{We}, it seems that equations \eqref{log(2) formula 2} and \eqref{log(2) formula 3} are not as easily accessible.

\begin{theorem}\label{main wz theorem} The following identities are true:
\begin{align}
\pi\frac{\Gamma(x)\Gamma(x+1)}{\Gamma^2\left(x+\frac{1}{2}\right)}=&\sum_{n=0}^{\infty}\frac{(4n+2x+1)}{(2n+1)(n+x)}\frac{\left(\frac{1}{2}+x\right)_n}{(1+x)_n}{2n\choose
n}\frac{1}{2^{2n}},\label{log(2) formula 1 generalization}\\
%???=&\sum_{n=0}^{\infty}???\\
4\pi\frac{\Gamma(x)\Gamma(x+1)}{\Gamma^2\left(x+\frac{1}{2}\right)}
=&\sum_{n=0}^{\infty} \frac{(2(2n+1)^2(15n+2)+x
P(n,x))}{(2n+1)(2n+x)(2n+x+1)^2} \frac{
\left(\frac{1}{2}+x\right)_n^2}{\left(1+\frac{x}{2}\right)_n\left(\frac{1+x}{2}\right)_n}{2n\choose
n}\frac{1}{2^{6n}},\label{log(2) formula 3 generalizaion}
\end{align}
where $P(n,x)=(2n+1)(86n+19)+4x(20n+7)+12x^2$.  The Pochhammer symbol is given by $(x)_m:=\Gamma(x+m)/\Gamma(x)$.
\end{theorem}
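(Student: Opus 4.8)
The plan is to prove both identities by the Wilf--Zeilberger scheme reviewed above, treating $x$ as a passive parameter that rides along unchanged while $k$ is the telescoping variable. For each identity I would seek a WZ pair $(F(n,k),G(n,k))$ — with $x$ entering both $F$ and $G$ as a parameter — normalized so that the slice $k=0$ reproduces the summand on the right-hand side, i.e. so that $G(n,0)$ equals the $n$-th term of the series in \eqref{log(2) formula 1 generalization} (respectively \eqref{log(2) formula 3 generalizaion}). Granting the functional equation \eqref{WZ functional equation}, summing over $n$ and checking that the boundary terms $F(0,k)$ and $\lim_{n\to\infty}F(n,k)$ vanish makes $H(k):=\sum_{n\ge0}G(n,k)$ periodic in $k$; the uniform-convergence and $k$-independence hypotheses leading to \eqref{Carlson identity} then force $H(k)$ to be constant. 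The identity is then read off as
\begin{equation*}
\text{(series)}=\sum_{n\ge0}G(n,0)=H(0)=\lim_{j\to\infty}H(j)=\sum_{n\ge0}\lim_{j\to\infty}G(n,j),
\end{equation*}
and the final step is to evaluate this last, much simpler sum in closed form and recognize it as $\pi\,\Gamma(x)\Gamma(x+1)/\Gamma^2(x+\tfrac12)$ (respectively four times this).

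Concretely, I would first recast the summands into clean hypergeometric form. In \eqref{log(2) formula 1 generalization} one has $\binom{2n}{n}2^{-2n}=(1/2)_n/n!$, so the series becomes a combination, well-poised in structure, of $(1/2+x)_n$, $(1/2)_n$, $(1+x)_n$ and $n!$; in \eqref{log(2) formula 3 generalizaion} the duplication relation $(1+\tfrac{x}{2})_n(\tfrac{1+x}{2})_n=(1+x)_{2n}/4^n$ together with $\binom{2n}{n}2^{-6n}=(1/2)_n/(n!\,16^n)$ collapses the Pochhammer part to $(1/2+x)_n^2(1/2)_n/\big((1+x)_{2n}\,n!\,4^n\big)$ times the displayed rational factor. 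With the summand in this shape the creative task is to guess the correct $k$-dependent extension $G(n,k)$ of the known $k=0$ slice; I would then apply the WZ/Gosper algorithm to manufacture the certificate $F$ and verify \eqref{WZ functional equation} as a single rational-function identity in $n,k,x$. To identify the constant I expect the limiting series $\sum_n\lim_{j}G(n,j)$ to reduce to a Gauss-summable ${}_2F_1$ at argument $1$, whose value is exactly a quotient of Gamma functions of the required shape; as an internal check, setting $x\to0$ must reproduce \eqref{log(2) formula 1} and \eqref{log(2) formula 3} once the $1/x$ pole of $\Gamma(x)$ cancels the $n=0$ singularity of the series.

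The hard part will be the discovery of the WZ pair for \eqref{log(2) formula 3 generalizaion}. The baroque numerator $P(n,x)=(2n+1)(86n+19)+4x(20n+7)+12x^2$ and the repeated pole $(2n+x+1)^2$ in the denominator signal that the certificate $F$ is a genuinely complicated rational multiple of the Pochhammer block, so guessing its form — rather than merely verifying it — is the real bottleneck; this is presumably why the authors single this identity out as the one requiring WZ. A secondary difficulty is purely analytic: justifying the interchange of the $n$-sum with the $j\to\infty$ limit, and confirming that $\lim_{j\to\infty}G(n,k+j)$ is genuinely independent of $k$, which are exactly the hypotheses needed to reach \eqref{Carlson identity}. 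By contrast, \eqref{log(2) formula 1 generalization} should be comparatively routine: its well-poised structure makes it amenable to a classical summation — the excerpt already notes that \eqref{log(2) formula 1} follows from Dougall's theorem — which provides an independent check on the $k\to\infty$ evaluation.
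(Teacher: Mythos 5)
Your write-up reproduces the general telescoping scheme that the paper sets out before Theorem \ref{main wz theorem} (periodicity of $\sum_{n}G(n,k)$ in $k$, constancy via the limit $j\to\infty$, leading to \eqref{Carlson identity}), but it stops exactly where the paper's proof begins. The entire mathematical content of this theorem is the explicit exhibition of the two WZ pairs --- in the paper these are \eqref{wz pair number one} and \eqref{wz pair number three} --- together with the verification that $F(0,k)=0$, $\lim_{n\to\infty}F(n,k)=0$, the convergence hypotheses hold, and the closed-form evaluation of the limiting sum. You never produce the pairs: you propose to ``guess the correct $k$-dependent extension'' and let Gosper/WZ software manufacture the certificate, and you yourself identify this discovery step as the real bottleneck. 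Deferring the construction of the key object is a research plan, not a proof; nothing in your argument certifies that a pair with your prescribed normalization even exists, and the paper explicitly notes that finding these pairs was the substantial work behind its short proof. This is a genuine gap.

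There is also a structural mismatch in your setup that would make your endgame play out differently from the paper's, and your stated expectations for it are not correct guesses. You keep $x$ as a passive third parameter, read the identity off the slice $k=0$, and expect the constant $\lim_{j\to\infty}H(j)$ to be the $x$-dependent quotient $\pi\,\Gamma(x)\Gamma(x+1)/\Gamma^{2}\bigl(x+\tfrac12\bigr)$, recognized via a Gauss-summable ${}_2F_1$ at argument $1$. In the paper the theorem's variable $x$ \emph{is} the telescoping variable $k$: the pairs depend only on $(n,k)$, the factor $\bigl(\tfrac12\bigr)_k^2/(1)_k^2$ sitting inside $G$ is what becomes the Gamma quotient once it is transposed to the other side, and the constant is a pure number --- $\tfrac12=\arcsin(1)/\pi$ for \eqref{log(2) formula 1 generalization} and $2=12\arcsin(1/2)/\pi$ for \eqref{log(2) formula 3 generalizaion}. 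Note in particular that the second evaluation is an arcsine series at $\tfrac12$, i.e.\ a ${}_2F_1$ at argument $\tfrac14$, not a Gauss sum at $1$. So beyond the missing certificates, your plan would require genuinely three-parameter certificates in $(n,k,x)$ and a closed-form evaluation you have not identified, whereas the paper's two-variable design makes both the telescoping and the constant evaluation elementary. Your algebraic normalizations of the summands (the duplication identity and the conversion of $\binom{2n}{n}2^{-2n}$, $\binom{2n}{n}2^{-6n}$ into Pochhammer form) are correct, but they are preprocessing, not the proof.
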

\begin{proof}  We begin by proving \eqref{log(2)
formula 1 generalization}. Consider the following
WZ-pair:
\begin{equation}\label{wz pair number one}
\begin{split}
F(n,k)&=-\frac{\left(\frac{1}{2}+k\right)_n\left(\frac{1}{2}\right)_n\left(\frac{1}{2}\right)^2_k}
{\left(1+k\right)_n\left(1\right)_n\left(1\right)^2_k}\cdot\frac{n}{2(n+k)},\\
G(n,k)&=\frac{\left(\frac{1}{2}+k\right)_n\left(\frac{1}{2}\right)_n\left(\frac{1}{2}\right)^2_k}
{\left(1+k\right)_n\left(1\right)_n\left(1\right)^2_k}\cdot\frac{k(4n+2k+1)}{2(n+k)(2n+1)}.
\end{split}
\end{equation}
It is easy to see that $F(0,k)=0$, and
$\lim_{n\rightarrow\infty}F(n,k)=0$.  Since
$\sum_{n=0}^{\infty}G(n,k)$ converges uniformly, we conclude from the previous discussion that
\begin{equation*}
\begin{split}
\sum_{n=0}^{\infty}G(n,k)=&\sum_{n=0}^{\infty}\lim_{j\rightarrow\infty}G(n,k+j)\\
=&\frac{1}{\pi}\sum_{n=0}^{\infty}\frac{1}{(2n+1)}{2n\choose
n}\frac{1}{2^{2n}}=\frac{\arcsin(1)}{\pi}=\frac{1}{2}.
\end{split}
\end{equation*}
Rearrange the formula, and let $k\rightarrow x$ to
complete the proof of \eqref{log(2) formula 1 generalization}.

The proof of \eqref{log(2) formula 3 generalizaion} is similar. Consider the following
WZ-pair:
\begin{equation}\label{wz pair number three}
\begin{split}
F(n,k)=&-U(n,k)\cdot\frac{4n}{2n+k},\\
G(n,k)=&U(n,k)\cdot\frac{2(15n+2)(2n+1)^2+k
P(n,k)}{(2n+k+1)^2(2n+k)(2n+1)}\cdot\frac{k}{2},
\end{split}
\end{equation}
where
\begin{align*}
U(n,k)=\frac{1}{16^n}\frac{\left(\frac{1}{2}+k\right)_n^2\left(\frac{1}{2}\right)_n}
{\left(1+\frac{k}{2}\right)_n\left(\frac{1}{2}+\frac{k}{2}\right)_n\left(1\right)_n}\frac{\left(\frac{1}{2}\right)_k^2}{(1)_k^2},
\end{align*}
and $P(n,k)$ is given in the statement of the theorem.  Then $F(0,k)=0$, $\lim_{n\rightarrow\infty}F(n,k)=0$, and
$\sum_{n=0}^{\infty}G(n,k)$ converges uniformly, so we have
\begin{equation*}
\begin{split}
\sum_{n=0}^{\infty}G(n,k)=&\sum_{n=0}^{\infty}\lim_{j\rightarrow\infty}G(n,k+j)\\
=&\frac{6}{\pi}\sum_{n=0}^{\infty}\frac{1}{(2n+1)}{2n\choose
n}\frac{1}{2^{4n}}=\frac{12\arcsin(1/2)}{\pi}=2.
\end{split}
\end{equation*}
Rearranging the final result, and relabeling $k$ as $x$ completes
the proof of \eqref{log(2) formula 3 generalizaion}.
%$\blacksquare$
\end{proof}
\begin{proof}[\textit{Proof of Theorem \ref{first wz
theorem}.}]

The shortest proof of \eqref{log(2) formula 1} follows from using
the definition of $s$, to show that $s=1$ when $r=1$.  An
alternative proof follows from using \eqref{log(2) formula 1
generalization} to show that:
\begin{equation*}
\begin{split}
 2+2\sum_{n=1}^{\infty}\frac{(4n+1)}{(2n)(2n+1)}{2n\choose
n}^2\frac{1}{2^{4n}}&=\lim_{x\rightarrow
0}\left(\pi\frac{\Gamma(x)\Gamma(x+1)}{\Gamma^2(x+1/2)}-\frac{1}{x}\right)\\
&=4\log(2).
\end{split}
\end{equation*}
Similarly, \eqref{log(2) formula 3} follows from using
\eqref{log(2) formula 3 generalizaion}, to show that
\begin{equation*}
\begin{split}
 11+2\sum_{n=1}^{\infty}\frac{(15n+2)}{(2n)(2n+1)}{2n\choose
n}^2\frac{1}{2^{8n}}&=\lim_{x\rightarrow
0}\left(4\pi\frac{\Gamma(x)\Gamma(x+1)}{\Gamma^2(x+1/2)}-\frac{4}{x}\right)\\
&=16\log(2).%\quad%\blacksquare
\end{split}
\end{equation*}
\end{proof}

We can also use our newly-discovered WZ pairs to obtain some formulas for $\zeta(3)$.

\begin{theorem}The following identities are true:
\begin{align}
\zeta(3)&=\frac{2}{7}\sum_{n=0}^{\infty}\frac{(4n+3)16^n}{(2n+1)^3(n+1){2n\choose
n}^2}\label{zeta(3) formula 3},\\
\zeta(3)&\stackrel{?}{=}\frac{4}{7}\sum_{n=0}^{\infty}\frac{(3n+2)4^n}{(2n+1)^3(n+1){2n\choose
n}^2}\label{zeta(3) formula 2},\\
\zeta(3)&=\frac{1}{16}\sum_{n=0}^{\infty}\frac{(30n+19)}{(2n+1)^3(n+1){2n\choose
n}^2}\label{zeta(3) formula 1}.
\end{align}
\end{theorem}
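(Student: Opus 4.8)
The plan is to run the telescoping argument of Section~\ref{section: WZ} once for each series, since every summand on the right of \eqref{zeta(3) formula 3}, \eqref{zeta(3) formula 2}, and \eqref{zeta(3) formula 1} is hypergeometric in $n$. First I would strip the summand down to Pochhammer form so that its structure is transparent. Using ${2n\choose n}^{-1}=(1)_n/(4^n(\tfrac12)_n)$ together with $2n+1=(\tfrac32)_n/(\tfrac12)_n$ and $n+1=(2)_n/(1)_n$, the common core factor becomes
\begin{equation*}
\frac{1}{(2n+1)^3(n+1){2n\choose n}^2}=\frac{\left(\tfrac12\right)_n(1)_n^3}{16^n\left(\tfrac32\right)_n^3(2)_n},
\end{equation*}
so that \eqref{zeta(3) formula 1} carries a residual $16^{-n}$, \eqref{zeta(3) formula 3} no residual power at all, and \eqref{zeta(3) formula 2} a residual $4^{-n}$. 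In each case the resulting term is a well-poised hypergeometric term of exactly the kind treated by the method of \cite{Gu2}, and this reduction is what makes the search for a companion function $G(n,k)$ systematic.

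Next, for each identity I would produce a WZ pair $(F(n,k),G(n,k))$ obeying \eqref{WZ functional equation} and specialising to the target series at $k=0$, just as \eqref{wz pair number one} and \eqref{wz pair number three} do for Theorem~\ref{main wz theorem}. With a pair in hand the verification is mechanical: one checks $F(0,k)=0$ and $\lim_{n\to\infty}F(n,k)=0$, confirms uniform convergence of $\sum_{n\ge0}G(n,k)$, and then invokes \eqref{Carlson identity} to replace $\sum_n G(n,k)$ by $\sum_n\lim_{j\to\infty}G(n,k+j)$. The decisive difference from Theorem~\ref{main wz theorem}, where the shifted series summed to an $\arcsin$ value, is that here the shift $k\to\infty$ should collapse $G(n,k+j)$ to an elementary term proportional to $1/(2n+1)^3$ or $1/(n+1)^3$. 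Summing such a tail produces a rational multiple of $\sum_{n\ge0}1/(2n+1)^3=\tfrac78\zeta(3)$ or of $\sum_{n\ge0}1/(n+1)^3=\zeta(3)$, which is precisely how $\zeta(3)$ enters: for \eqref{zeta(3) formula 1} the tail should sum to $16\sum_n 1/(n+1)^3=16\zeta(3)$, and for \eqref{zeta(3) formula 3} to $4\sum_n 1/(2n+1)^3=\tfrac72\zeta(3)$, reproducing the prefactors $\tfrac1{16}$ and $\tfrac27$.

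As in the analogous remark for Theorem~\ref{main wz theorem}, the real labour is not the verification but the discovery of the pairs; once guessed, Zeilberger's algorithm certifies them automatically. I expect the $16^n$ identity \eqref{zeta(3) formula 3} and the $1$-series \eqref{zeta(3) formula 1} to yield to this search, consistent with their already being known to Batir and to Gosper. The main obstacle is \eqref{zeta(3) formula 2}: for the $4^n$ series the natural companion $G(n,k)$ appears to violate one of the hypotheses needed to pass from periodicity in $k$ to \eqref{Carlson identity}, either because the telescoped function exceeds the exponential-type bound required for the Carlson-type conclusion, or because $\lim_{j\to\infty}G(n,k+j)$ fails to settle to a $k$-independent limit, so the argument stalls exactly here. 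This is why I would record \eqref{zeta(3) formula 2} only as a high-precision numerical conjecture, keeping the $\stackrel{?}{=}$. As a final consistency check I would expand $\pi\Gamma(x)\Gamma(x+1)/\Gamma^2(x+\tfrac12)$ one order beyond the $\zeta(2)$ computation in the text, read off the coefficient of $x^2$ from \eqref{log(2) formula 1 generalization}, and confirm numerically that the $\zeta(3)$ value so obtained agrees with the constant furnished by \eqref{zeta(3) formula 1}.
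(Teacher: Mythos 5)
Your opening reduction of the summands to Pochhammer form is correct, and you are right to leave \eqref{zeta(3) formula 2} as conjectural, but the core of your argument is a deferral rather than a proof. Everything rests on the unexhibited WZ pairs: for each series you need a mate $G(n,k)$ with $G(n,0)$ equal to the target summand and with $\lim_{j\to\infty}G(n,k+j)$ collapsing to exactly $c/(2n+1)^3$ or $c/(n+1)^3$. No such pairs are produced, and no reason is given that they exist; the tail sums ``$16\sum_n 1/(n+1)^3$'' and ``$4\sum_n 1/(2n+1)^3$'' are read off from the identities you are trying to prove, not derived from any pair. Moreover, the collapse you require is structurally quite special: for pairs of the type in Theorem \ref{main wz theorem}, the $k\to\infty$ limit of $G$ retains a central-binomial factor and a $1/\pi$ (for \eqref{wz pair number one} one gets $\lim_{j\to\infty}G(n,k+j)=\tfrac{1}{\pi(2n+1)}\binom{2n}{n}4^{-n}$), which is exactly why the constants there are $\arcsin$ values; obtaining a pure cubic tail instead requires complete cancellation of the Pochhammer structure, and demonstrating that is precisely the content of the theorem. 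As written, your proposal therefore has the same epistemic status as the statement itself.

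The paper's proof avoids any search for new pairs, and this is the idea you are missing: it reuses the pairs \eqref{wz pair number one} and \eqref{wz pair number three} already constructed for Theorem \ref{main wz theorem}, shifted by $n\to n+\tfrac12$ and $k\to k+y$. Summing the functional equation \eqref{WZ functional equation} over $n$ now leaves a nontrivial boundary term $F\left(\tfrac12,k+y\right)$; one then sums a second time, over $k$, and, since $G\left(n+\tfrac12,0\right)=0$, differentiates in $y$ to arrive at \eqref{zeta(3) F* G* sum}. Crucially, $\zeta(3)$ does not come from a limit of $G$ at all: it comes from the $F$ side, because $F\left(\tfrac12,k\right)$ collapses to $-2/\left(\pi^2(2k+1)^2\right)$ (resp.\ $-2/\left(\pi^2(k+1)^2\right)$), whose derivative in $k$ sums to $7\zeta(3)/\pi^2$ (resp.\ $4\zeta(3)/\pi^2$), while the shifted $G^{*}$-sum reproduces the reciprocal central-binomial series and the factors of $\pi^2$ cancel. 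This is also why your diagnosis of \eqref{zeta(3) formula 2} is off target: it remains open not because of a failure of the Carlson-type hypotheses, but simply because no WZ pair proving \eqref{log(2) formula 2} is known, so there is nothing to shift; had such a pair been found, the same half-integer shift would settle it. Your final consistency check is likewise misdirected: the coefficient of $x^2$ in \eqref{log(2) formula 1 generalization} yields a $\zeta(3)$-type sum weighted by harmonic numbers, not the clean series \eqref{zeta(3) formula 1}.
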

\begin{proof}  The idea is that shifting the summation in formulas \eqref{log(2) formula 1}, \eqref{log(2) formula 2},
and \eqref{log(2) formula 3} by $n\rightarrow n-1/2$, changes
them into formulas for $\zeta(3)$ \cite{Gu2}. The
summand in \eqref{log(2) formula 2} becomes
\begin{equation*}
\frac{(6n+1)}{(2n)(2n+1)}{2n\choose
n}^2\frac{1}{2^{6n}}\rightarrow\frac{2(3n+2)4^n}{\pi^2(2n+1)^3(n+1){2n\choose
n}^2}.
\end{equation*}
To make this observation rigorous, we prove
\eqref{zeta(3) formula 1} and \eqref{zeta(3) formula 3} by using
the WZ-pairs from Theorem \ref{main wz theorem}.  A
rigorous proof of \eqref{zeta(3) formula 2} should be easy to
construct, by first finding a WZ proof of \eqref{log(2) formula 2}.

Shift both sides of \eqref{WZ functional equation} by
$\frac{1}{2}$ and $y$, and then sum the equation from $n=0$ to
$n=\infty$.  Under the hypothesis that
$\lim_{n\rightarrow\infty}F\left(n+\frac{1}{2},k+y\right)=0$, we
have
\begin{equation*}
F\left(\frac{1}{2},k+y\right)=-\sum_{n=0}^{\infty}G\left(n+\frac{1}{2},k+y+1\right)+\sum_{n=0}^{\infty}G\left(n+\frac{1}{2},k+y\right).
\end{equation*}
The right-hand side of the formula telescopes with respect to $k$.
Sum both sides of the equation from $k=0$ to
$k=\infty$:
\begin{equation*}
\sum_{k=0}^{\infty}F\left(\frac{1}{2},k+y\right)=-\lim_{k\rightarrow\infty}\sum_{n=0}^{\infty}G\left(n+\frac{1}{2},k+y\right)
+\sum_{n=0}^{\infty}G\left(n+\frac{1}{2},y\right).
\end{equation*}
In the cases we consider, all three sums converge uniformly,
the limit are independent of $y$, and
$G\left(n+\frac{1}{2},0\right)=0$.  Differentiating with respect
to $y$, and using the notation
$F^{*}(n,k)=\frac{\partial}{\partial k}F(n,k)$, and
$G^{*}(n,k)=\frac{\partial}{\partial k}G(n,k)$, reduces the formula
to
\begin{equation}\label{zeta(3) F* G* sum}
\sum_{k=0}^{\infty}F^{*}\left(\frac{1}{2},k\right)=\sum_{n=0}^{\infty}G^{*}\left(n+\frac{1}{2},0\right).
\end{equation}
Notice that $G^{*}\left(n+\frac{1}{2},0\right)=\lim_{y\rightarrow
0}\frac{G\left(n+\frac{1}{2},y\right)}{y}$.

 If we use $F$ and $G$
from \eqref{wz pair number one}, then we obtain
\begin{equation*}
\begin{split}
\sum_{n=0}^{\infty}G^{*}\left(n+\frac{1}{2},0\right)&=\frac{2}{\pi^2}\sum_{n=0}^{\infty}\frac{(4n+3)16^{n}}{(2n+1)^3(n+1){2n\choose
n}^2}.
\end{split}
\end{equation*}
The sum involving $F^{*}$ is easy to evaluate. Notice
that
\begin{equation*}
F\left(\frac{1}{2},k\right)=-\frac{2}{\pi^2(2k+1)^2},
\end{equation*}
and therefore
\begin{equation*}
\sum_{k=0}^{\infty}F^{*}\left(\frac{1}{2},k\right)=\frac{8}{\pi^2}\sum_{k=0}^{\infty}\frac{1}{(2k+1)^3}=\frac{7\zeta(3)}{\pi^2}.
\end{equation*}
Formula \eqref{zeta(3) formula 3} follows from substituting these
results into \eqref{zeta(3) F* G* sum}.

In order to prove \eqref{zeta(3) formula 1}, we use the
WZ-pair given in \eqref{wz pair number three}.  Observe
that
\begin{equation*}
\sum_{n=0}^{\infty}G^{*}\left(n+\frac{1}{2},0\right)=\frac{1}{4\pi^2}\sum_{n=0}^{\infty}\frac{(30n+19)}{(2n+1)^3(n+1){2n\choose
n}^2},
\end{equation*}
which matches the right-hand side of \eqref{zeta(3) formula 1} up
to a constant.  To evaluate the $F^{*}$-sum, notice that
\begin{equation*}
F\left(\frac{1}{2},k\right)=\frac{-2}{\pi^2(k+1)^2},
\end{equation*}
and therefore
\begin{equation*}
\sum_{k=0}^{\infty}F^{*}\left(\frac{1}{2},k\right)=\frac{4}{\pi^2}\sum_{k=0}^{\infty}\frac{1}{(k+1)^3}=\frac{4\zeta(3)}{\pi^2}.
\end{equation*}
Substituting the last two results into \eqref{zeta(3) F* G* sum}
completes the proof of \eqref{zeta(3) formula 1}. %$\blacksquare$
\end{proof}

Notice that Gosper first proved equation \eqref{zeta(3) formula 1} \cite{GS}, and Batir proved \eqref{zeta(3) formula 3} using
log-sine integrals (combine formulas 3 and 4 on page 664 of
\cite{Ba}).  Formula \eqref{zeta(3) formula
2} is numerically true, but it remains open.

We conclude this section, by showing that it is also possible
to find WZ-pairs when \eqref{formula for log(4/r)} diverges. If we
consider the WZ-pair:
\begin{equation*}
\begin{split}
F(n,k)=&\frac{n}{(2n+k)^2}\cdot\frac{\left(\frac{1}{2}\right)_n\left(1+\frac{k}{2}\right)_n\left(\frac{1}{2}+\frac{k}{2}\right)_n}{\left(1\right)_n\left(1+k\right)^2_n}
\cdot 16^n,\\
G(n,k)=&-\frac{P(n,k)}{n(2n+k)^2(1+2n+k)}\cdot\frac{\left(\frac{1}{2}\right)_n\left(1+\frac{k}{2}\right)_n\left(\frac{1}{2}+\frac{k}{2}\right)_n}{\left(1\right)_n\left(1+k\right)^2_n}
\cdot 16^n,
\end{split}
\end{equation*}
where $P(n,k)=3k^3+k^2(20n+3)+k n(43n+12)+n^2(30n+11)$, then it is
possible to obtain a finite summation identity
\begin{equation}\label{divergent sum formula}
\begin{split}
\sum_{n=1}^{m-1}\frac{30n+11}{(2n)(2n+1)}{2n\choose
n}^2=&-4+6\sum_{n=1}^{m-1}\frac{1}{n}{2n\choose
n}\\
&+\frac{1}{2m}{2m\choose m}^2{_4F_3}\left(\substack{1,1,2m,2m\\
m+1,m+1,2m+1};1\right),
\end{split}
\end{equation}
which holds for $m\in \mathbb{N}$. This formula corresponds to the values $(r,s)=(4,1/11)$, but notice that equation \eqref{m(1) and
m(16) relation} already shows that $s=1/11$ when $r=4$.

%%%%%%%%%%%%%%%%%%%%%%%%%%%%%%%%%%%%%%%%%%%%%%%%%%%%%%%%%%%%%%%%%

\section{Connections with the elliptic dilogarithm}\label{section: elliptic dilog}

%%%%%%%%%%%%%%%%%%%%%%%%%%%%%%%%%%%%%%%%%%%%%%%%%%%%%%%%%%%%%%%%%

In Section \ref{section: WZ} we used the WZ method to establish a relation between Mahler
measures.  In this section, we show that our techniques provide a new way to
prove relations between elliptic dilogarithms.  Briefly
recall the definitions of $m(\alpha)$ and $n(\alpha)$:
\begin{align*}
m(\alpha):=&\m\left(\alpha+x+x^{-1}+y+y^{-1}\right),\\
n(\alpha):=&\m\left(x^3+y^3+1-\alpha x y\right).
\end{align*}
We examined $m(\alpha)$ in the previous section, and
$n(\alpha)$ has been studied in papers such as \cite{RV},
\cite{Rgsubmit} and \cite{LR}. We begin by proving
new $q$-series expansions for both functions.

\begin{theorem}\label{elliptic dilog theorem} Suppose that $q\in (-1,1)$, and let $D(z)=\Im\left(\Li_2(z)+\log|z|\log(1-z)\right)$
denote the Bloch-Wigner dilogarithm.  The following formulas are
true:
\begin{align}
\frac{4}{\pi}\sum_{n\in\mathbb{Z}}D\left(i
q^n\right)=&m\left(4\frac{\varphi^2(q)}{\varphi^2(-q)}\right)
\label{m q-series expansion},\\
\frac{9}{2\pi}\sum_{n\in\mathbb{Z}}D\left(e^{2\pi i/3}
q^n\right)=&n\left(3\frac{a(q)}{b(q)}\right)
\label{n q-series expansion},\\
\frac{9}{\pi}\sum_{n\in\mathbb{Z}}D\left(e^{\pi i/3}
q^n\right)=&2n\left(3\frac{a(q)}{b(q)}\right)
+n\left(3\frac{a(q^2)}{b(q^2)}\right).\label{n second q-series
expansion}
%\frac{9}{2\pi}\sum_{n=-\infty}^{\infty}D\left(e^{2\pi i/3}
%q^n\right)=&n\left(3\sqrt[3]{x(q)}\right)
%\label{n q-series expansion},\\
%\frac{9}{\pi}\sum_{n=-\infty}^{\infty}D\left(e^{\pi i/3}
%q^n\right)=&2n\left(3\sqrt[3]{x(q)}\right)
%+n\left(3\sqrt[3]{x(q^2)}\right),\label{n second q-series
%expansion}
\end{align}
The signature-$3$ theta functions are given by
\begin{align*}
a(q):=\sum_{(n,m)\in\mathbb{Z}^2}q^{n^2+m n+m^2}, && b(q):=\sum_{(n,m)\in\mathbb{Z}^2}\omega^{n-m} q^{n^2+m n+m^2},
\end{align*}
where $\omega=e^{2\pi i/3}$ \cite{Be5}.
\end{theorem}
\begin{proof}  First notice that \eqref{n q-series expansion} implies \eqref{n second
q-series expansion}.  By elementary functional equations for
the Bloch-Wigner dilogarithm,  $\frac{1}{2}D(z^2)=D(z)+D(-z)$, and
$D(z)=-D(\frac{1}{z})$, it is possible to obtain
\begin{equation*}
D\left(e^{\pi i/3} q^n\right)=D\left(e^{2\pi i/3}
q^{-n}\right)+\frac{1}{2}D\left(e^{2\pi i/3} q^{2n}\right).
\end{equation*}
Summing over $n$ shows that \eqref{n q-series expansion} implies \eqref{n second q-series expansion}.

To prove \eqref{m q-series expansion} and \eqref{n q-series
expansion}, we use an idea described in \cite[Section~8]{Rgsubmit}.  Consider the following formula from
\cite{Rgsumbit2} (Rodriguez-Villegas first proved a version of
this formula in \cite{RV}):
\begin{equation*}
\frac{\pi^2}{32x}m\left(4\sqrt{1-\frac{\varphi^4(-q)}{\varphi^4(q)}}\right)
=\sum_{\substack{n=0\\k=0}}^{\infty}\frac{(-1)^{n}(2n+1)}{\left((2n+1)^2+x^2(2k+1)^2\right)^2},
\end{equation*}
where $q=e^{-\pi x}$, and $x>0$.  Express the sum as an integral,
and then apply the involution for the weight-$1/2$ theta function:
\begin{align*}
=&\frac{\pi^2}{16}\int_{0}^{\infty}u\left(\sum_{k=0}^{\infty}e^{-\pi
(k+1/2)^2 x^2 u}\right)\left(\sum_{n=0}^{\infty}(-1)^n(2n+1)
e^{-\pi
(n+1/2)^2 u}\right)\d u\\
=&\frac{\pi^2}{32x}\int_{0}^{\infty}\sqrt{u}\left(\sum_{k=-\infty}^{\infty}(-1)^k
e^{-\frac{\pi k^2}{x^2
u}}\right)\left(\sum_{n=0}^{\infty}(-1)^n(2n+1) e^{-\pi
(n+1/2)^2 u}\right)\d u\\
=&\frac{\pi}{8x}\sum_{k=-\infty}^{\infty}(-1)^k
\sum_{n=0}^{\infty}\frac{(-1)^n}{(2n+1)}
\left(\frac{\pi|k|}{x}+\frac{1}{(2n+1)}\right)e^{-\pi
(2n+1)|k|/x}\\
=&\frac{\pi}{8x}\sum_{k=-\infty}^{\infty}(-1)^k D\left(i e^{-\pi
|k|/x}\right).
\end{align*}
If we let $x\rightarrow 1/x$, and then use the following identity
\begin{equation*}
\frac{\varphi^4(-q)}{\varphi^4(q)}=1-\frac{\varphi^4(-e^{-\pi/x})}{\varphi^4(e^{-\pi/x})},
\end{equation*}
it is easy to see that
\begin{equation*}
 m\left(4\frac{\varphi^2(-q)}{\varphi^2(q)}\right)
 =\frac{4}{\pi}\sum_{k\in\mathbb{Z}}(-1)^k D\left(i
 q^{|k|}\right)=\frac{4}{\pi}\sum_{k\in\mathbb{Z}}D\left(i
 (-q)^{k}\right).
\end{equation*}
The second step uses $(-1)^k
D\left(i q^{|k|}\right)=D\left(i(-q)^k\right)$. Formula \eqref{m
q-series expansion} then follows from sending $q\rightarrow -q$.
A different proof can be constructed by differentiating
\eqref{m q-series expansion} with respect to $q$, and then
applying the formulas of Ramanujan.

A proof of \eqref{n q-series expansion} can be obtained by looking
at the following sum:
\begin{equation*}
\sum_{(n,k)\in\mathbb{Z}^2}\frac{(3k+1)}{\left((3k+1)^2+x^2
(2n+1)^2\right)^2}.
\end{equation*}
If the series is transformed into an integral of theta
functions, then the involution for the weight-$1/2$ theta function
leads to a dilogarithm sum, and the involution for the
weight-$3/2$ theta function leads to
Rodriguez-Villegas's $q$-series for $n(\alpha)$ (see formula
(2-10) in \cite{LR}). %$\blacksquare$
\end{proof}
For certain values of $q$ the left-hand sides of equations \eqref{m
q-series expansion}, \eqref{n q-series expansion}, and \eqref{n
second q-series expansion} equal elliptic dilogarithms.  To see this, briefly consider an elliptic curve
\begin{equation*}
E: y^2=4x^3-g_2 x-g_3.
\end{equation*}
Then $E$ can be parameterized by
$(x,y)=\left(\wp(u),\wp'(u)\right)$, where $\wp(u)$ is the
Weierstrass function.  The periods of $\wp(u)$ are denoted $\omega$ and $\omega'$, and the period ratio
$\tau=\frac{\omega'}{\omega}$ is assumed to have $\Im(\tau)>0$. If
$P=\left(\wp(u),\wp'(u)\right)$ denotes an arbitrary point on $E$,
and $q=e^{2\pi i \tau}$, then the elliptic dilogarithm is defined
by
\begin{equation*}
D^{E}(P):=\sum_{n\in\mathbb{Z}}D\left(e^{2\pi i u/\omega}
q^{n}\right).
\end{equation*}
Since we will only be interested in torsion points, we can assume
that $u=a \omega+b\omega'$, for some $(a,b)\in\mathbb{Q}^2$.  For
appropriate choices of $E$, the series expansions in Theorem
\ref{elliptic dilog theorem} equal $D^{E}(P)$ at $3$, $4$ and
$6$-torsion points.

Now we focus on equation \eqref{m q-series expansion}.  In order to translate the right-hand side into elliptic dilogarithms, we must identify the relevant elliptic elliptic curves and torsion points. Let us set
$\beta=1-\varphi^4(-q)/\varphi^4(q)$. The classical theory
of elliptic functions shows that we can calculate $q$ as a
function of $\beta$:
\begin{equation}\label{q-inversion}
q=e^{2\pi i\tau}
=\exp\left(-\pi\frac{{_2F_1}\left(\substack{\frac{1}{2},\frac{1}{2}\\
1};1-\beta\right)}{{_2F_1}\left(\substack{\frac{1}{2},\frac{1}{2}\\
1};\beta\right)}\right).
\end{equation}
It is known that $g_2$ and $g_3$ are also functions of $q$
\cite{We2}. In Ramanjujan's notation we have $g_2=\frac{4}{3}\pi^4
M(q)$, and $g_3=\frac{8}{27}\pi^6 N(q)$ \cite[pg. 126]{Be3}.
Applying formulas (13.3) and (13.4) in \cite[pg. 127]{Be3}, we
obtain:
\begin{equation}\label{J function beta calculation}
J(\tau)=\frac{g_2^3}{g_2^3-27g_3^2}%=\frac{4\left(1-\alpha+\alpha^2\right)^3}{27\alpha^2(1-\alpha)^2}
=\frac{\left(1+14\beta+\beta^2\right)^3}{108\beta(1-\beta)^4}.
\end{equation}
This relation allows us to easily translate between $\beta$ and $(g_2,g_3)$.  We have six choices of $\beta$ for every
$g_2^3/g_3^2$, so caution must be exercised to pick the correct
$\beta$. We often checked our work numerically by
computing $q$ from $g_2$ and $g_3$ (with the \texttt{Mathematica} function
``WeierstrassHalfPeriods"), and then comparing it to calculations
using \eqref{q-inversion}.

\begin{theorem}\label{theorem : elliptic dilog equivalent} Let $E(k,\ell)$ denote the elliptic
curve:
\begin{equation*}
y^2=4x^3-27(k^4-16k^2+16)\ell^2
x+27\left(k^6-24k^4+120k^2+64\right)\ell^3.
\end{equation*}
Formula \eqref{m(1) and m(16) relation} is equivalent to
\begin{equation}\label{m(1) and m(16) elliptic dilog equivalency}
11 D^{E_1}(P_1)=6 D^{E_2}(P_2),
\end{equation}
where $E_1=E(5,2)$, $E_2=E(16,1/2)$, $P_1=(87,1080)$, and
$P_2=(195,432)$.

Formula \eqref{m(2) and m(8) relation} is equivalent to
\begin{equation}\label{m(2) and m(8) elliptic dilog equivalency}
5 D^{E_3}(P_3)=8 D^{E_4}(P_4),
\end{equation}
where $E_3=E(8,1/2)$, $E_4=E(3\sqrt{2},1)$, $P_3=(51,216)$, and
$P_4=(33,324)$.
\end{theorem}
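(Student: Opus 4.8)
The plan is to run each Mahler measure occurring in \eqref{m(1) and m(16) relation} and \eqref{m(2) and m(8) relation} through the $q$-series expansion \eqref{m q-series expansion}, so as to realize it as an elliptic dilogarithm at a $4$-torsion point. The key observation is that $i=e^{2\pi i/4}$: if $P$ is the point with $u=\omega/4$, then $e^{2\pi iu/\omega}=i$, and therefore
\begin{equation*}
D^{E}(P)=\sum_{n=-\infty}^{\infty}D\!\left(iq^{n}\right),\qquad q=e^{2\pi i\tau}.
\end{equation*}
Comparing with \eqref{m q-series expansion}, I would conclude $m(\alpha)=\frac{4}{\pi}D^{E}(P)$ precisely when $\alpha=4\varphi^{2}(q)/\varphi^{2}(-q)$, that is, when $\beta:=1-16/\alpha^{2}$ is the modulus attached to $\tau$ by \eqref{q-inversion}. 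For the four values $\alpha\in\{5,16,8,3\sqrt2\}$ one has $\beta\in(0,1)$, so $q$ is real and the identification is immediate.

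To pin down the curve in each case I would compute $\beta=1-16/\alpha^{2}$, substitute it into \eqref{J function beta calculation} to obtain $J(\tau)$, and match this $J$-invariant against the family $E(k,\ell)$; the outcome is $k=\alpha$, with $\ell$ fixed so that the Weierstrass model carries the intended rational torsion. Since $D^{E}(P)$ depends only on $\tau$ and on the normalized position $u/\omega$ of the point, and is unchanged under rescaling the lattice, the value does not depend on $\ell$, which serves only to display $P$ with integer coordinates. Computing $P=(\wp(\omega/4),\wp'(\omega/4))$ and checking it against the asserted points — for instance $(87,1080)\in E(5,2)$ and $(195,432)\in E(16,1/2)$, both $4$-torsion by the division polynomials — yields the four unconditional identities
\begin{equation*}
\tfrac{\pi}{4}m(5)=D^{E_1}(P_1),\quad \tfrac{\pi}{4}m(16)=D^{E_2}(P_2),\quad \tfrac{\pi}{4}m(8)=D^{E_3}(P_3),\quad \tfrac{\pi}{4}m\!\left(3\sqrt2\right)=D^{E_4}(P_4).
\end{equation*}

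It remains to reach $m(1)$ and $m(2)$, for which $\beta<0$ and \eqref{m q-series expansion} is not directly applicable. Here I would invoke the elementary Kurokawa--Ochiai relations $m(1)+m(16)=2m(5)$ and $m(2)+m(8)=2m\!\left(3\sqrt2\right)$ from \cite{KO}. Using the first two displayed identities, $11D^{E_1}(P_1)=6D^{E_2}(P_2)$ is equivalent to $11m(5)=6m(16)$, and substituting $m(5)=\tfrac12\bigl(m(1)+m(16)\bigr)$ reduces this to $11m(1)=m(16)$. Symmetrically, $5D^{E_3}(P_3)=8D^{E_4}(P_4)$ is equivalent to $5m(8)=8m\!\left(3\sqrt2\right)$, and substituting $m\!\left(3\sqrt2\right)=\tfrac12\bigl(m(2)+m(8)\bigr)$ reduces it to $4m(2)=m(8)$. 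The lopsided coefficients $11,6$ and $5,8$ are simply the residue of trading the inaccessible $m(1),m(2)$ for the dilogarithmically accessible $m(5),m(16),m(8),m\!\left(3\sqrt2\right)$ through these $2$-isogeny relations.

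The crux, and the step I expect to absorb essentially all the work, is the middle one: confirming that the abstract curve of modulus $\beta=1-16/\alpha^{2}$ is exactly $E(\alpha,\ell)$, and that the $4$-torsion point $u=\omega/4$ maps to the stated rational point. Because six values of $\beta$ share each $J$, matching \eqref{J function beta calculation} requires selecting the correct root in order to land in the right $\tau$-chamber and hence on the right torsion point; one then fixes $\ell$ and carries out the $\wp$-evaluation of the torsion coordinates, cross-checking numerically via \texttt{WeierstrassHalfPeriods} as suggested after \eqref{J function beta calculation}. Once the four base identities are secured, the descent to \eqref{m(1) and m(16) relation} and \eqref{m(2) and m(8) relation} is only the linear algebra above.
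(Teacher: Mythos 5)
Your proposal is correct and follows essentially the same route as the paper: both identify $m(k)$ with $\frac{4}{\pi}D^{E(k,\ell)}(P)$ at a $4$-torsion point via the $q$-expansion \eqref{m q-series expansion} and the $J$-invariant matching \eqref{J function beta calculation} with $\beta=1-16/k^2$, then eliminate the inaccessible $m(1)$ and $m(2)$ using the functional equations $m(1)+m(16)=2m(5)$ and $m(2)+m(8)=2m\left(3\sqrt{2}\right)$, finishing with the same linear algebra. The only cosmetic difference is the order of substitution (you convert the dilogarithm identity to $11m(5)=6m(16)$ first, while the paper substitutes the dilogarithm expressions directly into $11m(1)=m(16)$), which changes nothing of substance.
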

\begin{proof} If we set $\beta=1-16/k^2$, %Notice that if $k>4$, then $\beta\in(0,1)$.
then we can rearrange \eqref{J function beta calculation} to
obtain
\begin{equation*}
\frac{g_2^3}{g_3^2}=\frac{27\left(16-16k^2+k^4\right)^3}{\left(64+120k^2-24k^4+k^6\right)^2}.
\end{equation*}
Therefore, for some choice of $\ell$, we have
\begin{align*}
g_2&=27\left(k^4-16k^2+16\right)\ell^2,\\
g_3&=-27\left(k^6-24k^4+120k^2+64\right)\ell^3.
\end{align*}
In practice, we choose $\ell$ so that $E(k,\ell)$ has a rational
$4$-torsion point $P$.  We can then use
$4\varphi^2(q)/\varphi^2(-q)=k$, along with equation \eqref{m
q-series expansion}, to conclude that
\begin{equation*}
m(k)=D^{E(k,\ell)}(P).
\end{equation*}

Now we prove \eqref{m(1) and m(16) elliptic dilog
equivalency}.  It is easy to check that $E(5,2)$ has a
$4$-torsion point
$P_1=(87,1080)=\left(\wp\left(\frac{\omega}{4}\right),\wp'\left(\frac{\omega}{4}\right)\right)$.
%The periods of this elliptic curve are
%$(\omega,\omega')\approx(.46686,.26004i)$, and therefore it can be
%numerically verified that $\wp\left(\frac{\omega}{4}\right)=87$.
It follows from the definition of the elliptic dilogarithm, that
\begin{equation*}
m(5)=\frac{4}{\pi}D^{E(5,2)}(P_1).
\end{equation*}
A result from \cite{LR} shows that $m(1)+m(16)=2m(5)$, and
therefore we have proved that
\begin{equation}\label{m(1) and m(16) elliptic dilog relation}
m(1)+m(16)=\frac{8}{\pi}D^{E(5,2)}(P_1).
\end{equation}
When $k=16$, it is easy to see that $E(16,1/2)$ has the
$4$-torsion point
$P_2=(195,432)=\left(\wp\left(\frac{\omega}{4}\right),\wp'\left(\frac{\omega}{4}\right)\right)$.
%Using Mathematica we find that $(\omega,
%\omega')\approx(.46686,.13302i)$, and as expected
%$\wp\left(\frac{\omega}{4}\right)=195$.
Using the definition of the elliptic dilogarithm, we conclude that
\begin{equation}\label{m(16) elliptic dilog relation}
m(16)=\frac{4}{\pi}D^{E(16,1/2)}(P_2).
\end{equation}
Substituting \eqref{m(1) and m(16) elliptic dilog relation} and
\eqref{m(16) elliptic dilog relation} into \eqref{m(1) and m(16)
relation} completes the proof of \eqref{m(1) and m(16) elliptic
dilog equivalency}.

The proof of \eqref{m(2) and m(8) elliptic dilog
equivalency} is more or less identical, except we require the relation $m(2)+m(8)=2m(3\sqrt{2})$.
\end{proof}

The key point of Theorem \ref{theorem : elliptic dilog equivalent}, is that we can start from elliptic dilogarithm identities such as \eqref{m(1) and m(16) elliptic dilog equivalency} or \eqref{m(2) and m(8) elliptic dilog equivalency}, translate both sides into hypergeometric functions, and then prove the hypergeometric identities with the WZ method.  It seems plausible that some additional formulas involving elliptic
dilogarithms might be provable with the WZ method.  Bloch and Grayson conjectured several identities of the form:
\begin{equation*}
\sum_{r}a_r D^{E}(rP)\stackrel{?}{=}0,
\end{equation*}
which they refer to as ``exotic relations" \cite{BG}.
Zagier later proposed restrictions that $E$ should satisfy
in order to possess such a relation \cite{GL}. Theorem
\ref{elliptic dilog theorem} implies that certain exotic relations
are equivalent to formulas for hypergeometric functions.  We
conclude this section by translating an exotic relation due to
Bertin into an identity between Mahler measures \cite{bert2}.

\begin{theorem}\label{theorem bertin} Let $E$ denote the elliptic curve
\begin{equation*}
y^2=4x^3-432 x+1188,
\end{equation*}
and let $P=(-6,54)$.  Bertin's exotic relation
\begin{equation}\label{bertin exotic}
16D^{E}(P)-11D^{E}(2P)=0,
\end{equation}
is equivalent to
\begin{equation}\label{bertin equivalency}
16n\left(\frac{7+\sqrt{5}}{\sqrt[3]{4}}\right)-8n\left(\frac{7-\sqrt{5}}{\sqrt[3]{4}}\right)=19n\left(\sqrt[3]{32}\right).
\end{equation}
\end{theorem}
\begin{proof}
If we notice that
$P=(-6,54)=\left(\wp\left(\frac{\omega-3\omega'}{6}\right),\wp'\left(\frac{\omega-3\omega'}{6}\right)\right)$,
then \eqref{bertin exotic} is equivalent to
\begin{equation*}
16\sum_{n\in\mathbb{Z}}D\left(e^{\pi i/3}
q^{n-1/2}\right)=11\sum_{n\in\mathbb{Z}}D\left(e^{2\pi i/3}
q^n\right).
\end{equation*}
By formulas \eqref{n q-series expansion} and \eqref{n second
q-series expansion}, this amounts to showing that
\begin{equation*}
0=16n\left(3\frac{a(\sqrt{q})}{b(\sqrt{q})}\right)-19n\left(3\frac{a(q)}{b(q)}\right)-8n\left(3\frac{a(q^2)}{b(q^2)}\right).
\end{equation*}
We translate the ratios of theta functions into algebraic numbers below.
%\begin{equation*}
%0=16n\left(3\sqrt[3]{x\left(\sqrt{q}\right)}\right)-19n\left(3\sqrt[3]{x(q)}\right)-8n\left(3\sqrt[3]{x(q^2)}\right).
%\end{equation*}

It is well known that the following inverse relation holds \cite{Borwein}:
\begin{align*}
\beta=\frac{c^3(q)}{a^3(q)},&&
q=
\exp\left(-\frac{2\pi}{\sqrt{3}}\frac{{_2F_1}\left(\substack{\frac{1}{3},\frac{2}{3}\\
1};1-\beta\right)}{{_2F_1}\left(\substack{\frac{1}{3},\frac{2}{3}\\
1};\beta\right)}\right),
\end{align*}
where $a(q)$ and $b(q)$ appear in Theorem \ref{theorem : elliptic dilog equivalent}, and $c(q)$ is given by
\begin{equation*}
c(q):=\sum_{(n,m)\in\mathbb{Z}^2}q^{(n+1/3)^2+(m+1/3)(n+1/3)+(m+1/3)^2}.
\end{equation*}
It is also known that $a^3(q)=b^3(q)+c^3(q)$ \cite{Borwein}.  By formulas (4.6) and (4.8) in \cite[pg. 107]{Be5}, and the
values $g_2=\frac{4}{3}\pi^4 M(q)=432$, and $g_3=\frac{8}{27}\pi^6
N(q)=-1188$, we have
\begin{equation*}
\frac{6912}{6971}=\frac{g_2^3}{g_2^3-27g_3^2}=\frac{(1+8\beta)^3}{64\beta(1-\beta)^3}.
\end{equation*}
Therefore $\beta=\frac{5}{32}$.  Since $\frac{a(q)}{b(q)}=\frac{1}{\sqrt[3]{1-\beta}}$, we have
\begin{equation*}
\frac{a(q)}{b(q)}=\frac{1}{\sqrt[3]{1-\beta}}=\frac{\sqrt[3]{32}}{3}.
\end{equation*}
Finally, if we write $\frac{a^3(\sqrt{q})}{b^3(\sqrt{q})}=\frac{1}{1-\alpha}$ and
$\frac{a^3(q^2)}{b^3(q^2)}=\frac{1}{1-\gamma}$, then $\alpha$ and $\gamma$ are
conjugate zeros of a second-degree modular polynomial with respect
to $\beta$ \cite[pg. 94]{LR}:
\begin{equation*}
27\alpha\beta(1-\alpha)(1-\beta)-(\alpha+\beta-2\alpha\beta)^3=0.
\end{equation*}
With the aid of a computer, we obtain
$\frac{a(\sqrt{q})}{b(\sqrt{q})}=\frac{7+\sqrt{5}}{3\sqrt[3]{4}}$, and
$\frac{a(q^2)}{b(q^2)}=\frac{7-\sqrt{5}}{3\sqrt[3]{4}}$. %$\blacksquare$
\end{proof}

The hypergeometric form of $n(\alpha)$ is due to Rodriguez-Villegas (see
formula (2-36) in \cite{LR}).  If $|\alpha|$ is sufficiently large, then
\begin{equation}\label{n hypergeometric}
n(\alpha)=\Re\left(\log\alpha-\frac{2}{\alpha^3}{_4F_3}\left(\substack{\frac43,\frac53,1,1\\2,2,2};\frac{27}{\alpha^3}\right)\right).
\end{equation}
If follows that \eqref{bertin equivalency} can be rewritten as a series identity.  Unfortunately, it seems doubtful that a WZ proof of \eqref{bertin equivalency} is possible.  The arguments of the hypergeometric functions are irrational numbers, while virtually all of the known WZ proofs deal with rational hypergeometric functions \cite{GZ}.

%%%%%%%%%%%%%%%%%%%%%%%%%%%%%%%%%%%%%%%%%%%%%%%%%%%%%%%%%%%%%%%%%

\section{Conclusion}\label{section: Conclusion}

%%%%%%%%%%%%%%%%%%%%%%%%%%%%%%%%%%%%%%%%%%%%%%%%%%%%%%%%%%%%%%%%%

We conclude by comparing our new results to Ramanujan's formulas
for $1/\pi$.  One of Ramanujan's major insights was to
find formulas such as
\begin{equation}\label{Ramanujan}
\frac{1}{\pi}=\sum_{n=0}^{\infty}(a+b n){2n\choose n}^3 \frac{z^n}{2^{6n}},
\end{equation}
where $a$, $b$, and $z$ are parameterized by logarithmic, modular, and quasi-modular functions \cite{Borwein}, \cite{Chudnovsky}:
\begin{align*}
z=&4\frac{\varphi^4(-q)}{\varphi^4(q)}\left(1-\frac{\varphi^4(-q)}{\varphi^4(q)}\right),\\
a=&\frac{1}{\pi\varphi^4(q)}\left(1+\frac{8\log|q|}{\varphi(q)}\sum_{n=1}^{\infty}n^2 q^{n^2}\right),\\
b=&\frac{\log|q|}{\pi}\left(1-2\frac{\varphi^4(-q)}{\varphi^4(q)}\right).
\end{align*}
We can produce infinitely many irrational algebraic triplets $(a,b,z)$ which make \eqref{Ramanujan} valid.  The parameters are simultaneously algebraic whenever $q=e^{2\pi i \tau}$, with $\tau$ a quadratic irrational in the upper half plane (some additional restrictions on $\tau$ are necessary to ensure that the infinite series in \eqref{Ramanujan} converges).  In fact, it is known that $z$ is algebraic whenever $\tau$ is the period ratio of an elliptic curve, but the values of $a$ and $b$ are only algebraic if the elliptic curve has complex multiplication (and this is less than obvious).

Now consider the fact that we only detected three algebraic pairs $(r,s)$ for which
\begin{equation*}
\log\left(\frac{4}{r}\right)=r
s+\sum_{n=1}^{\infty}\frac{\left(2(1+r
s)n+1\right)}{(2n)(2n+1)}{2n\choose
n}^2\left(\frac{r}{4}\right)^{2n}.
\end{equation*}
We can use formula \eqref{m q-series expansion} to deduce $q$-parameterizations for
$r$ and $s$:
\begin{align}\label{s and q relation}
r=\frac{\varphi^2(-q)}{\varphi^2(q)},&&s=\frac{\mathscr{L}(i,q)}{\mathscr{L}(i,-q)},
\end{align}
where
\begin{align*}
\varphi(q)=\sum_{n\in\mathbb{Z}}q^{n^2},&&\mathscr{L}(i,q)=\sum_{n\in\mathbb{Z}}D(i
q^n).
\end{align*}
It follows that $r$ is a modular function and $s$ is not.  It seems that a small miracle has to occur for $r$ and $s$ to be algebraic simultaneously.  We conjecture that $r$ and $s$ are algebraically independent for almost all values of $q=e^{2\pi i \tau}$, where $\tau$ is the period ratio of an elliptic curve.  Verifying this conjecture will likely require a careful investigation of divisors of elliptic curves.

%    Text of article.

%    Bibliographies can be prepared with BibTeX using amsplain,
%    amsalpha, or (for "historical" overviews) natbib style.
\bibliographystyle{amsplain}

\end{document}